\def\appendices{\appendix\crefalias{section}{appendix}}
\newtheorem{lemma}{Lemma}
\def\R{\mathbb{R}}
\def\figwidthone{8cm}
\def\figwidthtwo{16.5cm}
\DeclareMathOperator{\spn}{span}
\newcommand{\bx}{\mathbf{x}}
\newcommand{\by}{\mathbf{y}}
\newcommand{\bp}{\mathbf{p}}
\newcommand{\bP}{\mathbf{P}}
\newcommand{\bB}{\mathbf{B}}
\newcommand{\bw}{\mathbf{w}}
\newcommand{\br}{\mathbf{r}}
\newcommand{\bJ}{\mathbf{J}}
\newcommand{\bU}{\mathbf{U}}
\newcommand{\bV}{\mathbf{V}}
\title{Residual subspace evolution strategies for nonlinear inverse problems}
\date{\small (dated: \today)}
\author{\orcidlink{0000-0003-1065-2590}\,Francesco Alemanno\\
\textit{\small SolarSud T.R.E., Via Tiziano Vecellio 11, San Giorgio Ionico (TA), Italy}}
\begin{document}

\maketitle

\textbf{Abstract:} Nonlinear inverse problems pervade engineering and science, yet noisy, non-differentiable, or expensive residual evaluations routinely defeat Jacobian-based solvers. Derivative-free alternatives either demand smoothness, require large populations to stabilise covariance estimates, or stall on flat regions where gradient information fades. 
  
This paper introduces residual subspace evolution strategies (RSES), a derivative-free solver that draws Gaussian probes around the current iterate, records how residuals change along those directions, and recombines the probes through a least-squares solve to produce an optimal update. The method builds a residual-only surrogate without forming Jacobians or empirical covariances, and each iteration costs just $k+1$ residual evaluations with $O(k^3)$ linear algebra overhead, where $k$ remains far smaller than the parameter dimension. 

Benchmarks on calibration, regression, and deconvolution tasks show that RSES reduces misfit consistently across deterministic and stochastic settings, matching or exceeding xNES, NEWUOA, Adam, and ensemble Kalman inversion under matched evaluation budgets. The gains are most pronounced when smoothness or covariance assumptions break, suggesting that lightweight residual-difference surrogates can reliably guide descent where heavier machinery struggles. 

\textbf{Keywords:} residual subspace evolution strategies, nonlinear inverse problems, derivative-free optimisation, ensemble methods, Tikhonov regularisation, stochastic search

\tableofcontents

\section{Introduction}
\label{sec:introduction}

Many estimation, calibration, and imaging tasks reduce to finding parameter vectors $\bx \in \R^n$ that drive a residual map
\begin{equation}
  \label{eq:inverse_problem}
  F : \R^n \to \R^m, \qquad F(\bx) \approx 0
\end{equation}
toward zero. Classical solvers such as Gauss-Newton, sequential quadratic programming, and trust-region variants perform well when derivatives remain stable and forward models behave smoothly, yet they falter once residuals become noisy, non-differentiable, or expensive to query. Adaptive gradient methods like Adam \cite{kingma2014adam} handle nonconvex objectives with momentum and per-parameter scaling, yet they can stall on broad plateaus or when gradient noise obscures descent directions.

Derivative-free quadratic modelers like NEWUOA \cite{powell2006newuoa} relax the need for derivative access but still assume smoothness, spending many evaluations to maintain reliable local surrogates. Population methods build robustness from sample statistics: ensemble Kalman inversion \cite{evensen2009data} regularises steps with empirical covariances, and natural evolution strategies such as xNES \cite{wierstra2014natural} adapt search directions globally. These approaches usually require sizeable populations to keep their estimates stable, which inflates cost when each residual call is expensive.

Residual subspace techniques point to a lighter alternative. Anderson acceleration \cite{anderson_iterative_1965,walker2011anderson} and direct inversion in the iterative subspace (DIIS) \cite{pulay1982} recycle residual differences as proxies for Jacobian actions inside small affine subspaces, often converging rapidly without explicit derivatives. Although these methods target fixed-point problems rather than general optimization and do not employ ensembles, they demonstrate that residual evaluations alone can recover low-rank Jacobian information.

Building on this insight, the present work introduces residual subspace evolution strategies (RSES), a derivative-free solver designed for noisy inverse problems. Each iteration perturbs the current iterate with $k$ Gaussian draws, stacks the resulting residual differences, and solves a least-squares problem on the probed subspace. The recovered coefficients recombine the probes into an optimal update without forming Jacobians.

This paper derives RSES as the solution of a minimal-residual problem on the probe subspace, provides parameter guidance for the ridge term, probe count, and probe scale, and reports deterministic and Monte Carlo benchmarks on systems ranging from two-parameter calibration to multilayer-perceptron regression tasks with several thousand parameters. Under matched evaluation budgets, RSES reduces residuals more effectively than derivative-free and gradient-based baselines when their model or smoothness assumptions break. The following sections develop the core methodology, present numerical experiments, and conclude with a discussion of limitations and future directions.

\section{Core methodology}
\label{sec:methodology}

The RSES update solves a regularised minimal-residual problem on a span of Gaussian probes. Each iteration samples perturbations around the current iterate, records the corresponding residual changes, and weights the perturbations so that their recombination yields an improved estimate. The notation uses bold symbols for vectors and matrices: $\bx_t$ denotes the current iterate, $\br_t \coloneqq F(\bx_t)$ its residual, $\bP$ collects the probe directions $\bp_i$, $\bB$ stores their residual differences, and $\bw$ holds the recombination weights. A small numerical floor $\epsilon$, set by default to $10^{-8}$, stabilises the ridge update.

\paragraph{Probe surrogate.}

The method frames the inverse problem through the residual energy
\begin{equation}
  \phi(\bx) \coloneqq \frac{1}{2}\lVert F(\bx)\rVert_2^2.
\end{equation}
At iteration $t$, the algorithm draws $k$ Gaussian perturbations with scale $\sigma$, yielding the probe and residual-difference matrices
\begin{eqnarray}
  \bP &=& [\bp_1,\,\ldots,\,\bp_k] \in \R^{n\times k}, \\
  \bB &=& [F(\bx_t+\bp_1)-\br_t,\,\ldots,\,F(\bx_t+\bp_k)-\br_t] \in \R^{m\times k},
\end{eqnarray}
with each probe constructed from a scaled standard normal draw $\mathbf{g}_i\sim\mathcal{N}(0,I_n)$ so that $\bp_i = \sigma \mathbf{g}_i$. A first-order expansion of $F$ around $\bx_t$ with Jacobian $\bJ_t$ produces
\begin{equation}
  \bB \approx \bJ_t \bP,
  \label{eq:B_linearised}
\end{equation}
from which, for any coefficient vector $\bw$, the residual at the displaced point satisfies
\begin{equation}
  F(\bx_t+\bP \bw) \approx \br_t + \bB \bw.
  \label{eq:probe_surrogate}
\end{equation}
This surrogate reuses measured residual differences and keeps the expansion centred at $\bx_t$ without imposing additional constraints.

\paragraph{Ridge recombination.}

The next step minimises $\phi(\bx_t+\bP \bw)$ under the surrogate~\eqref{eq:probe_surrogate} with a small Tikhonov stabiliser:
\begin{equation}
  \min_{\bw\in\R^{k}} \; \frac{1}{2}\lVert \br_t + \bB \bw\rVert_2^2 + \frac{\lambda}{2}\lVert \bw\rVert_2^2.
  \label{eq:ridge_problem}
\end{equation}
The ridge term keeps the Gram matrix well behaved: with $\lambda>0$, the matrix $\bB^\top \bB + \lambda I_{k}$ remains positive definite, and the normal equations reduce to the $k\times k$ system
\begin{equation}
  (\bB^\top \bB + \lambda I_k) \bw = -\bB^\top \br_t,
  \label{eq:ridge_solution}
\end{equation}
which delivers coefficients that balance residual reduction against coefficient magnitude.

\paragraph{State update.}

Applying the recovered weights to the probes yields the parameter update
\begin{equation}
  \Delta \bx = \bP \bw, \qquad \bx_{t+1} = \bx_t + \Delta \bx.
  \label{eq:update}
\end{equation}
The step lives entirely within the probe span, stays centred at $\bx_t$, and mirrors a Gauss-Newton projection on the surrogate rather than on the full parameter space. In the idealised linear regime on the probe span, the ridge solve produces a descent direction that does not raise the residual energy. \Cref{lem:subspace_descent} in \cref{sec:descent_lemma} establishes that the exact surrogate yields $\phi(\bx_{t+1}) \leq \phi(\bx_t)$, with strict descent whenever the projected gradient on the probe span is nonzero.

\subsection{Algorithm}

The derivation above translates directly into a compact iteration that keeps each move inside the span of the current probes. At every step the algorithm evaluates the residual, draws $k$ Gaussian probes, records how the residual changes along them, solves a ridge-stabilised Gram system, and recombines the probes into a step. Algorithm~\ref{alg:kl} summarises this procedure.

\begin{algorithm}
\caption{Residual subspace evolution strategy (RSES)}
\label{alg:kl}
\begin{algorithmic}[1]
\Require Probe count $k\geq1$, ridge scale $\beta>0$, probe scale $\sigma>0$, forward model $F:\R^n\to\R^m$, tolerance $\textrm{tol}>0$, maximum iterations $T$, numerical floor $\epsilon$
\State Initialise $\bx_0$ and set $\br \gets F(\bx_0)$, $\lambda\gets\max(\beta \lVert \br\rVert_2^2,\,\epsilon)$
\For{$t$ from $0$ to $T-1$}
  \If{$\lVert \br\rVert<\textrm{tol}$}
    \State \textbf{break}
  \EndIf
  \State Draw $\mathbf{g}_i\sim\mathcal{N}(0,I_n)$ and set $\bp_i \gets \sigma \mathbf{g}_i$ for $i=1,\ldots,k$
  \State Set $\bP \gets [\bp_1,\,\ldots,\,\bp_k] \in \R^{n\times k}$ and $\bB \gets [F(\bx_t+\bp_1)-\br,\,\ldots,\,F(\bx_t+\bp_k)-\br] \in \R^{m\times k}$
  \State Solve $(\bB^\top \bB + \lambda I_k) \bw = -\bB^\top \br$ for $\bw$
  \State Set $\Delta \bx \gets \bP \bw$,~~ $\bx_{t+1} \gets \bx_t + \Delta \bx$,~~ $\br \gets F(\bx_{t+1})$
\State $\lambda \gets \max(\beta \lVert \br\rVert_2^2,\,\epsilon)$
\EndFor
\State \Return $\bx_T$ \Comment{or alternatively the iterate with the least $\lVert \br\rVert$}
\end{algorithmic}
\end{algorithm}

\subsection{Parameter selection}
\label{sec:param}

The iteration exposes three user choices: the probe count $k$, the probe scale $\sigma$, and the ridge scale $\beta$. A residual-dependent ridge parameter links these through
\begin{equation}
  \label{eq:lambda_schedule}
  \lambda_t = \max(\beta \lVert \br_t\rVert_2^2,\,\epsilon).
\end{equation}

\paragraph{Probe count.} Given an evaluation budget of $N_{\text{eval}}$ residual calls, a practical prescription sets
\begin{equation}
\label{eq:k_prescription}
  k = 4 + \bigl\lfloor 3\log m \bigr\rfloor, \qquad T = \left\lfloor \frac{N_{\text{eval}}}{k+1} \right\rfloor,
\end{equation}
which keeps the linear solve small while widening the span slowly with the residual dimension $m$. This rule lacks deep theoretical justification but proved adequate across the benchmarks reported below. Increasing the coefficient on $\log m$ helps when the residual shows strong anisotropy and the Gram matrix stays well conditioned. The logarithmic scaling loosely aligns with the Johnson-Lindenstrauss lemma when one views the probe subspace as approximating an $m$-dimensional residual space through random projection.

\paragraph{Probe scale.} The probe amplitude should match the problem units. For unit-scaled variables, a value of $\sigma = 0.05$ works well. For unscaled problems, setting $\sigma$ to a modest fraction of the prior standard deviation or the norm of a trusted starting point typically suffices. If the first steps stall, doubling $\sigma$ often helps; if residuals spike, halving it restores stability.

\paragraph{Ridge scale.} The ridge keeps the Gram matrix stable without dominating the residual term. A default of $\beta=10^{-5}$ makes $\lambda_t$ decay with $\lVert \br_t\rVert_2^2$ as the iterate improves. \Cref{sec:ridge_justification} shows that this schedule bounds the coefficient norm $\lVert\bw\rVert_2$ uniformly, independent of the residual matrix $\bB$, while maintaining step sizes proportional to the residual near convergence. Larger $\beta$ damps oscillations at the cost of smaller steps; smaller values accelerate progress when the probes align well but may destabilise the solve. For severely ill-conditioned problems, increasing $\beta$ to $10^{-4}$ or $10^{-3}$ restores robustness.

\Cref{fig:ablation} sweeps these parameters on the nonlinear deconvolution benchmark described in \cref{sec:deconvolution}. Moderate changes leave performance stable; very small probe counts or scales slow progress, while excessively large ridge values yield overly conservative steps.

\begin{figure}
  \centering
  \includegraphics[width=\figwidthtwo]{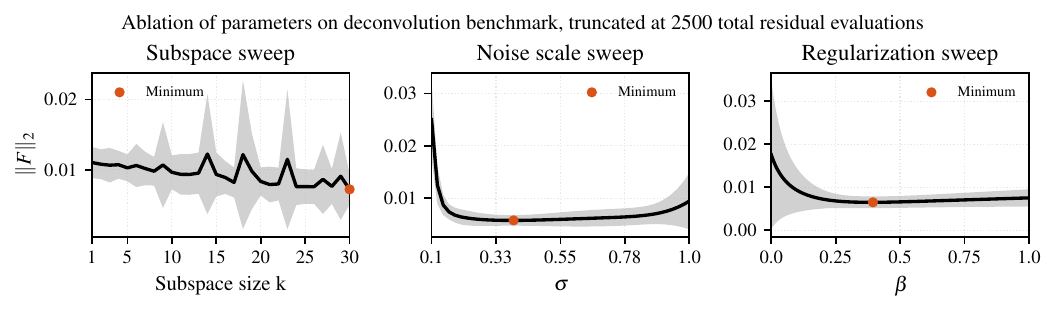}
\caption{Ablation of RSES hyperparameters on the nonlinear deconvolution benchmark. Each panel varies one parameter while the others stay at $k = 30$, $\sigma = 0.9$, and $\beta = 0.2$. Markers highlight the minimum terminal residual per sweep.}
  \label{fig:ablation}
\end{figure}

\subsection{Computational complexity}

Each iteration requires $k+1$ evaluations of $F$, forms $\bB^\top \bB$ and $\bB^\top \br$ in $O(mk^2)$ time, solves a $k\times k$ system in $O(k^3)$ time, and computes $\bP \bw$ in $O(nk)$ time. Storage for $\bP$ and $\bB$ costs $O(nk + mk)$ memory, while the Gram factorization adds $O(k^2)$, yielding total per-iteration memory of $O(nk + mk + k^2)$. The per-iteration computational cost is therefore $O((k+1)C_F + mk^2 + k^3 + nk)$ for residual cost $C_F$. When residual evaluations dominate, the linear algebra overhead remains negligible.

\section{Numerical experiments}
\label{sec:numerics}

All experiments use the parameter choices described in \cref{sec:param}. Each solver receives $7{,}500$ residual evaluations and a maximum runtime of $30$ seconds. The metrics recorded include the best-so-far residual norm, the corresponding parameter error $\lVert \bx - \bx_\star\rVert_2$, and cumulative runtime. RSES sets $k$ from \cref{eq:k_prescription} and maintains a fixed probe scale $\sigma$.

Results average ten independent trials per benchmark. The comparisons pit RSES against NEWUOA \cite{powell2006newuoa}, xNES \cite{wierstra2014natural}, Gauss-Newton (GN) with finite-difference Jacobians, ensemble Kalman inversion (EKI) \cite{iglesias2013ensemble}, and Adam \cite{kingma2014adam} on benchmarks where gradients are available. Tables report mean residual, parameter error, and elapsed time at a shared stopping index: the first evaluation where the mean RSES distance falls within $1\%$ of its minimum. Every solver reports its values at this index, enabling fair comparison at a matched stopping point rather than at arbitrary terminal iterates. Convergence curves show mean best-so-far distance with root-mean-square bands on a shared evaluation axis.

The benchmarks escalate in difficulty, covering: a $2\times 2$ linear system, low-dimensional stochastic Brownian calibration, noisy multilayer-perceptron regression with up to $4{,}353$ parameters, and nonlinear deconvolution with $128$ parameters under both intact and perturbed residual weightings.

\subsection{Linear algebraic system}
\label{sec:linear}

The first test solves the $2\times 2$ linear system
\begin{equation}
  \label{eq:linear_system}
  F(\bx) = \mathbf{A} \bx - \mathbf{R} = 0,
\end{equation}
with
\begin{equation}
  \mathbf{A} = \begin{bmatrix} 101 & -100 \\ 1 & -1 \end{bmatrix}, \qquad \mathbf{R} = \mathbf{A} [1,1]^\top,
\end{equation}
yielding the exact solution $\bx_\star = (1,1)^\top$. RSES and GN reach the solution at numerical precision. RSES attains the smallest residual with runtime on par with GN. NEWUOA approaches the optimum but plateaus with a small residual gap, xNES converges quickly but stalls at higher error, and EKI trails with the largest misfit (\cref{tab:eq_final_err,fig:eq_final_err}).

\begin{table}
  \caption{Terminal residual and state errors for the linear algebraic benchmark.}
  \label{tab:eq_final_err}
  \centering
  \begin{tabular}{l|r|r|r}
\toprule
Algorithm & $\|F(x)\|_2$ & $\|x - x_\star\|_2$ & Time (s) \\
\midrule
RSES & $\mathbf{0}$ & $\mathbf{5.5 \cdot 10^{-15}}$ & $6.7 \cdot 10^{-5}$ \\
NEWUOA & $2.5 \cdot 10^{-3}$ & $3.5 \cdot 10^{-1}$ & $7.8 \cdot 10^{-5}$ \\
xNES & $2.1 \cdot 10^{-1}$ & $1.5 \cdot 10^{0}$ & $\mathbf{5.2 \cdot 10^{-5}}$ \\
GN & $3.1 \cdot 10^{-14}$ & $4.8 \cdot 10^{-14}$ & $7.2 \cdot 10^{-5}$ \\
EKI & $2.8 \cdot 10^{0}$ & $2.4 \cdot 10^{0}$ & $7.6 \cdot 10^{-4}$ \\
\bottomrule
\end{tabular}

\end{table}

\begin{figure}
  \centering
  \includegraphics[width=\figwidthone]{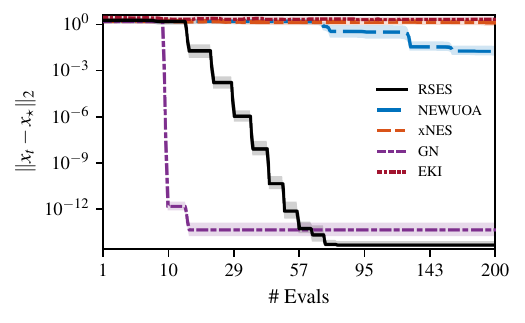}
\caption{Best-iterate convergence on the linear system. RSES and Gauss-Newton recover the solution with nearly identical runtime. RSES attains the smallest residual; NEWUOA improves before plateauing near the optimum; xNES converges quickly but stalls at higher error; and EKI stalls when the probed directions fail to cover the solution.}
  \label{fig:eq_final_err}
\end{figure}

\subsection{Brownian drift and diffusion}
\label{sec:brownian}

The second benchmark estimates the drift and diffusion of a Brownian motion from Monte Carlo summaries. The state follows
\begin{equation}
  dX_t = \mu \, dt + \sigma \, dW_t, \qquad X_0 = 0,
\end{equation}
and the goal is to tune the parameters
\begin{equation}
  \label{eq:brownian_theta}
  \boldsymbol{\theta} \coloneqq (\mu,\log \sigma)
\end{equation}
by matching the empirical terminal statistics
\begin{equation}
  \hat m(\boldsymbol{\theta}) = \tfrac{1}{N} \sum_{j=1}^N X_1^{(j)}, \qquad
  \hat v(\boldsymbol{\theta}) = \tfrac{1}{N} \sum_{j=1}^N \bigl(X_1^{(j)} - \hat m(\boldsymbol{\theta})\bigr)^2
\end{equation}
to reference values $(m_\star,v_\star)$ at
\begin{equation}
  \label{eq:brownian_reference}
  \boldsymbol{\theta}_\star = (0.15,\log 0.35), \qquad F(\boldsymbol{\theta}) = [\hat m(\boldsymbol{\theta}) - m_\star,\, \hat v(\boldsymbol{\theta}) - v_\star].
\end{equation}
Each trajectory evolves via Euler discretisation with $32$ steps of size $1/32$, averaged over $4{,}096$ paths.

RSES tracks the target despite Monte Carlo noise and achieves the smallest residual and parameter error. EKI follows closely in accuracy but runs slower. NEWUOA converges fastest yet stops far from the target. xNES reaches moderate accuracy with cost similar to RSES, while GN overshoots because noisy finite differences distort the Jacobian approximation (\cref{tab:brownian_final_err,fig:brownian_final_err}).

\begin{table}
  \caption{Terminal parameter errors for the Brownian-motion benchmark.}
  \label{tab:brownian_final_err}
  \centering
  \begin{tabular}{l|r|r|r}
\toprule
Algorithm & $\|F(x)\|_2$ & $\|x - x_\star\|_2$ & Time (s) \\
\midrule
RSES & $\mathbf{3.5 \cdot 10^{-4}}$ & $\mathbf{5.3 \cdot 10^{-3}}$ & $1.1 \cdot 10^{-1}$ \\
NEWUOA & $8.9 \cdot 10^{-2}$ & $2.9 \cdot 10^{-1}$ & $\mathbf{2.1 \cdot 10^{-2}}$ \\
xNES & $1.6 \cdot 10^{-2}$ & $4.9 \cdot 10^{-2}$ & $1.1 \cdot 10^{-1}$ \\
GN & $7.0 \cdot 10^{-2}$ & $8.4 \cdot 10^{2}$ & $8.4 \cdot 10^{-2}$ \\
EKI & $5.1 \cdot 10^{-4}$ & $6.9 \cdot 10^{-3}$ & $1.3 \cdot 10^{-1}$ \\
\bottomrule
\end{tabular}

\end{table}

\begin{figure}
  \centering
  \includegraphics[width=\figwidthone]{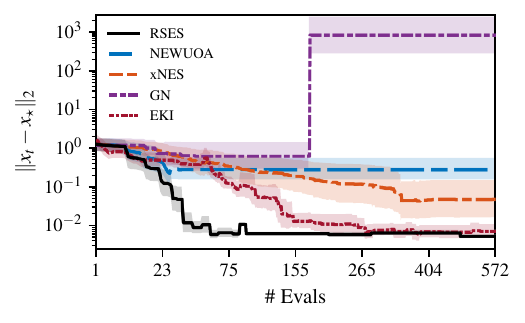}
\caption{Error trajectories for the Brownian calibration. RSES reduces the misfit steadily and ends with the lowest residual and parameter error. EKI tracks it but remains slower, xNES and NEWUOA trail with higher errors, and GN remains unstable.}
  \label{fig:brownian_final_err}
\end{figure}

\subsection{Multilayer perceptron regression}
\label{sec:mlp}

A one-dimensional regression task probes performance on a nonconvex landscape. The target signal follows
\begin{equation}
  \label{eq:mlp_signal}
  y(x) = \sin(3x) + 0.3x
\end{equation}
with additive Gaussian noise of standard deviation $0.05$. A two-hidden-layer multilayer perceptron with widths $8$, $16$, $32$, and $64$ fits these data using the smooth activation
\begin{equation}
  \label{eq:mlp_activation}
  \phi(s) = \frac{s}{\sqrt{1+s^2}},
\end{equation}
which belongs to the softsign family and keeps derivatives bounded \cite{glorot2010understanding}, together with a Tikhonov penalty of $10^{-6}$.

With scalar input $x$, the network computes
\begin{equation}
  \mathbf{h}_1 = \phi(\mathbf{W}_1 x + \mathbf{b}_1), \qquad
  \mathbf{h}_2 = \phi(\mathbf{W}_2 \mathbf{h}_1 + \mathbf{b}_2), \qquad
  \hat y = \mathbf{W}_3 \mathbf{h}_2 + b_3,
\end{equation}
where $\mathbf{W}_1\in\mathbb{R}^{d_1\times 1}$, $\mathbf{W}_2\in\mathbb{R}^{d_2\times d_1}$, $\mathbf{W}_3\in\mathbb{R}^{1\times d_2}$, and $(d_1,d_2)$ takes one of the four width pairs. The Tikhonov residual
\begin{equation}
  F(\boldsymbol{\theta}) =
  \begin{bmatrix}
    \hat y_{\boldsymbol{\theta}}(x) - y \\
    \sqrt{\lambda/2}\,\boldsymbol{\theta}
  \end{bmatrix}
\end{equation}
stacks data misfit and $\ell_2$ regularisation, and the loss reads
\begin{equation}
  \label{eq:mlp_loss}
  \mathcal{L}(\boldsymbol{\theta}) = \lVert F(\boldsymbol{\theta})\rVert_2^2.
\end{equation}
This surface contains broad plateaus and narrow valleys because the activation saturates and the regulariser couples all parameters, so gradient methods can stall even on this small dataset.

RSES uses $k=20$ (following $k = 4 + \lfloor 3\log m \rfloor$ with $m=256$ data points; the Tikhonov terms do not contribute to $m$ since the regularisation adds no problem complexity), $\sigma=0.01$, ridge scale $\beta=10^{-5}$, and approximately $20{,}000$ residual evaluations spread across $952$ iterations. Each iteration evaluates Gaussian probes of the Tikhonov residual without forming gradients. Adam \cite{kingma2014adam} receives the same $20{,}000$ gradient evaluations. The comparison omits xNES, EKI, and GN on this task because their sampling and covariance costs scale poorly with these parameter counts. NEWUOA uses the same evaluation budget but slows sharply, exceeding the $30$-second time limit once the width reaches $16$.

\Cref{tab:mlp_regression} reports final losses $\mathcal{L}(\boldsymbol{\theta})$ and wall-clock time. RSES attains the lowest loss at every width while running faster than Adam and orders of magnitude faster than NEWUOA on wider networks. Adam remains competitive in loss but slower in time. NEWUOA stalls at substantially higher loss once the width reaches $16$.

\Cref{fig:mlp_regression} shows residual histories across widths and the resulting best fits per algorithm. RSES descends steadily while Adam plateaus before recovering. The RSES fits track the noisy target closely without overfitting. These results indicate that RSES navigates the nonconvex MLP landscape reliably at this scale, though larger data regimes will require batching and may still favour tuned gradient-based training.

\begin{table}
  \caption{Final loss $\mathcal{L}(\boldsymbol{\theta})$ and wall-clock time for noisy MLP regression across network widths.}
  \label{tab:mlp_regression}
  \centering
  \begin{tabular}{l|l|l|r|r}
\toprule
Layer sizes & \# Params. & Algorithm & $\mathcal{L}(\theta)$ & Time (s) \\
\midrule
8x8 & $97$ & RSES & 0.5088 & $0.55$ \\
8x8 & $97$ & NEWUOA & 0.8661 & $3.66$ \\
8x8 & $97$ & Adam & 0.5399 & $8.96$ \\
\midrule
16x16 & $321$ & RSES & 0.5065 & $0.32$ \\
16x16 & $321$ & NEWUOA & 1.7980 & $> 30$ \\
16x16 & $321$ & Adam & 0.5390 & $5.58$ \\
\midrule
32x32 & $1153$ & RSES & 0.4833 & $0.79$ \\
32x32 & $1153$ & NEWUOA & 112.9771 & $> 30$ \\
32x32 & $1153$ & Adam & 0.5327 & $7.18$ \\
\midrule
64x64 & $4353$ & RSES & 0.4852 & $1.35$ \\
64x64 & $4353$ & NEWUOA & 155.4996 & $> 30$ \\
64x64 & $4353$ & Adam & 0.5841 & $11.25$ \\
\bottomrule
\end{tabular}

\end{table}

\begin{figure}
  \centering
\includegraphics[width=\figwidthtwo]{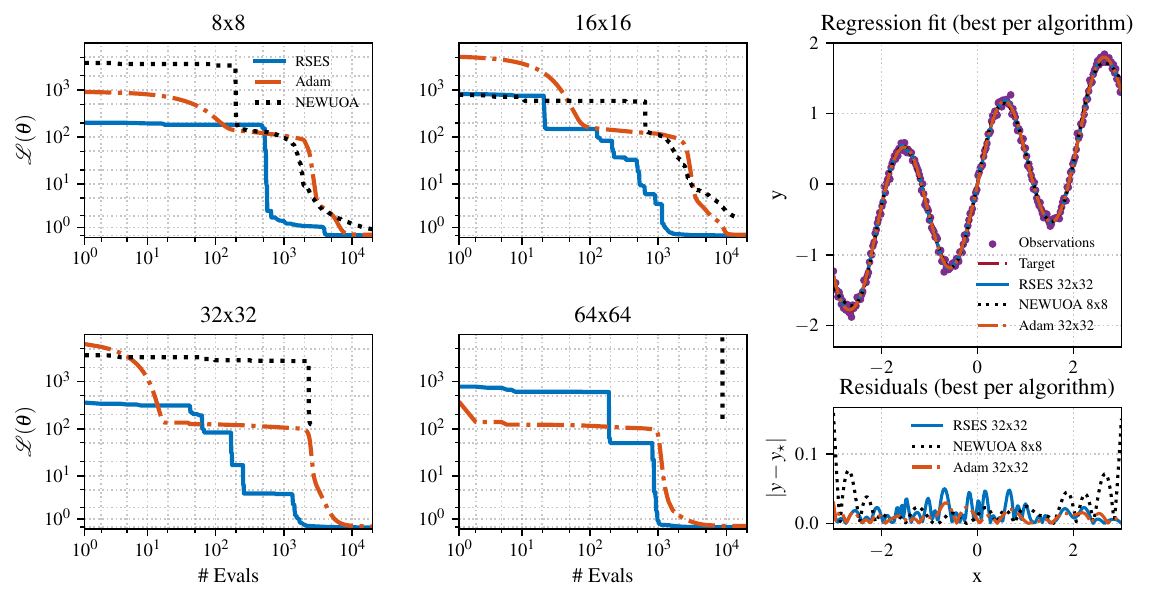}
  \caption{MLP regression under noisy observations. Panels show residual histories for each width and the corresponding best-fit curves per algorithm. RSES decreases the residual steadily while Adam pauses before resuming progress, and the RSES fits align with the noisy target without chasing high-frequency noise.}
  \label{fig:mlp_regression}
\end{figure}

\subsection{Nonlinear deconvolution}
\label{sec:deconvolution}

The final benchmark reconstructs a length-$128$ signal $\bx_\star$ from blurred, saturated, and noisy observations. The forward model applies dense blur, passes the result through a saturating nonlinearity, and adds noise:
\begin{eqnarray}
  g(z) &=& \tanh(z) + \nu(z),\\
  \nu_i &\sim& \mathcal{U}(-5\times 10^{-4},\, 5\times 10^{-4}), \qquad \text{iid per entry and evaluation}.
\end{eqnarray}
A blur matrix $\mathbf{A}\in\mathbb{R}^{n\times n}$ generates the observations
\begin{equation}
  \by = g(\mathbf{A} \bx_\star) + \boldsymbol{\eta}, \qquad \boldsymbol{\eta} \sim \mathcal{N}(0,\sigma^2 I).
\end{equation}
The jitter $\nu$ resamples on every call, introducing a small Monte Carlo perturbation atop the saturating nonlinearity. For each blur the entries of $\mathbf{A}$ are drawn from $\mathcal{N}(0,1/n)$, averaged with the transpose, shifted by $3 I_n$, and scaled by $1/5$ to obtain a well-conditioned low-pass filter. Outputs are rounded to $10^{-3}$ before adding noise, with
\begin{equation}
  \label{eq:sigma_noise}
  \sigma = 0.01 \max_i \lvert g((\mathbf{A} \bx_\star)_i)\rvert.
\end{equation}
This stylised forward model combines blur, saturation, quantisation, and correlated noise typical of imaging pipelines. All solvers evaluate the residual
\begin{equation}
  F(\bx) = \mathbf{B}_{\text{w}} \bigl(g(\mathbf{A} \bx) - \by\bigr),
\end{equation}
with weighting matrix $\mathbf{B}_{\text{w}}\in\mathbb{R}^{3n\times n}$ set to
\begin{equation}
  \label{eq:residual_weightings}
  \mathbf{B}_{\text{w}} = \begin{bmatrix} I \\ 0 \\ 0 \end{bmatrix} \quad \text{(intact residuals)} \quad \text{or} \quad \mathbf{B}_{\text{w}} = \begin{bmatrix} I \\ 0 \\ 0 \end{bmatrix} + \boldsymbol{\Delta}, \qquad \boldsymbol{\Delta}_{ij} \sim \mathcal{N}(0,1/n).
\end{equation}
This construction covers perturbed residuals with correlated components and breaks the covariance pattern across the $3n$ residual entries. The intact case stacks the measurements with two zero blocks; the perturbed case adds Gaussian mixing to every block.

\paragraph{Intact residuals.}
With the intact weighting (\cref{tab:soft_final_err_intact}), RSES achieves near-best state error and near-lowest residual while finishing far faster than every baseline. EKI posts the smallest residual but at much higher computational cost. NEWUOA trails in both accuracy and runtime. xNES remains inaccurate, while GN runs quickly but stays far from the target.

\paragraph{Perturbed residuals.}
With perturbed residuals (\cref{tab:soft_final_err_covariance_perturbed}), RSES leads in residual norm, state error, and runtime among accurate solvers. NEWUOA trails with higher error and cost. xNES and EKI drift from the target, while GN remains fast but inaccurate (\cref{fig:soft_final_err}).

\begin{table}
  \caption{Terminal errors for the nonlinear deconvolution benchmark under intact and perturbed residual weightings.}
  \label{tab:soft_final_err}
  \centering

  \subfloat[a][Intact residual weighting.\label{tab:soft_final_err_intact}]{
    \centering
    \begin{tabular}{l|r|r|r}
\toprule
Algorithm & $\|F(x)\|_2$ & $\|x - x_\star\|_2$ & Time (s) \\
\midrule
RSES & $6.0 \cdot 10^{-3}$ & $2.1 \cdot 10^{-1}$ & $\mathbf{1.7 \cdot 10^{-2}}$ \\
NEWUOA & $1.7 \cdot 10^{0}$ & $4.9 \cdot 10^{0}$ & $6.4 \cdot 10^{-1}$ \\
xNES & $6.8 \cdot 10^{-1}$ & $1.7 \cdot 10^{0}$ & $2.2 \cdot 10^{-1}$ \\
GN & $5.5 \cdot 10^{0}$ & $1.0 \cdot 10^{1}$ & $2.8 \cdot 10^{-2}$ \\
EKI & $\mathbf{3.7 \cdot 10^{-3}}$ & $\mathbf{2.0 \cdot 10^{-1}}$ & $1.0 \cdot 10^{0}$ \\
\bottomrule
\end{tabular}

  }

  \hfill

  \subfloat[b][Perturbed residual weighting.\label{tab:soft_final_err_covariance_perturbed}]{
    \centering
    \begin{tabular}{l|r|r|r}
\toprule
Algorithm & $\|F(x)\|_2$ & $\|x - x_\star\|_2$ & Time (s) \\
\midrule
RSES & $\mathbf{1.1 \cdot 10^{-2}}$ & $\mathbf{2.1 \cdot 10^{-1}}$ & $\mathbf{4.0 \cdot 10^{-2}}$ \\
NEWUOA & $1.9 \cdot 10^{0}$ & $4.5 \cdot 10^{0}$ & $1.5 \cdot 10^{0}$ \\
xNES & $1.3 \cdot 10^{0}$ & $2.0 \cdot 10^{0}$ & $3.4 \cdot 10^{-1}$ \\
GN & $1.1 \cdot 10^{1}$ & $1.0 \cdot 10^{1}$ & $5.8 \cdot 10^{-2}$ \\
EKI & $6.2 \cdot 10^{0}$ & $9.6 \cdot 10^{0}$ & $5.0 \cdot 10^{0}$ \\
\bottomrule
\end{tabular}

  }
\end{table}

\begin{figure}
  \centering
\subcaptionbox{Intact residual weighting.}{\includegraphics[width=\figwidthone]{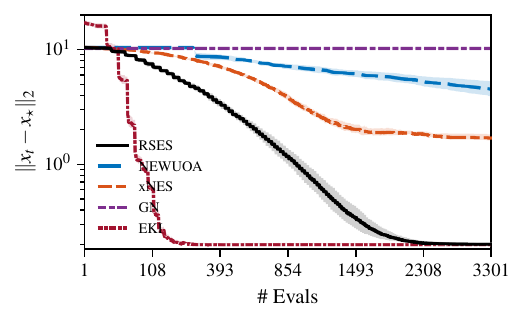}}
\subcaptionbox{Perturbed residual weighting.}{\includegraphics[width=\figwidthone]{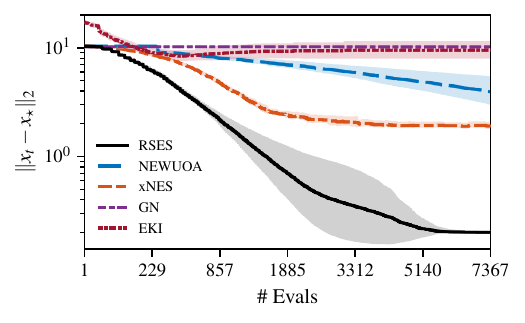}}
\caption{\label{fig:soft_final_err}Best-iterate error trajectories for the deconvolution task. (a) With intact residuals, RSES matches top accuracy while running far faster than every baseline. NEWUOA sits above RSES and EKI, while xNES and GN remain inaccurate. (b) With perturbed residuals, RSES leads in residual and state error with runtime close to GN. NEWUOA lags while xNES and EKI drift from the target.}
\end{figure}

\section{Conclusion}
\label{sec:conclusions}

Residual subspace evolution strategies address nonlinear inverse problems using residual-only information. Across deterministic, noisy, and non-differentiable benchmarks, RSES matched or exceeded every baseline: it solved the linear system to machine precision alongside Gauss-Newton, achieved the best residual and parameter accuracy in Brownian calibration while maintaining reasonable cost, attained the lowest losses on noisy MLP regression while running faster than Adam and far faster than NEWUOA, and ranked at or near the best errors on both deconvolution variants while running substantially faster than EKI.

NEWUOA and related model-based solvers excel on low-dimensional tasks but become costly because they require at least $O(n)$ evaluations per iteration. RSES gains an advantage as dimension increases because it operates with $k \ll n$ probes. Its maintained accuracy under residual weight perturbations underscores robustness to model misspecification.

The method depends on random probes that must explore useful directions. When $k\ll n$, poorly aligned samples can slow progress even with a fixed probe scale. Current parameter selection remains heuristic: the ridge and probe-scale settings do not yet adapt to local curvature or noise levels. Theoretical convergence guarantees under Monte Carlo perturbations remain open.

Future work will address these limitations through adaptive ridge tuning, preconditioned or structured probe sampling, and parallel residual evaluations. Plans include testing RSES on larger imaging and simulation workloads without adjoints, probing objectives with many local minima to assess global robustness, and combining RSES with coarse surrogate models or reduced-order bases when random probes alone cannot efficiently cover the full state space.

\paragraph{Code availability.}
The Julia implementation used for all numerical experiments is freely available at \url{https://doi.org/10.5281/zenodo.17872272}.

\bibliography{bib}

@article{anderson_iterative_1965,
	title = {Iterative {Procedures} for {Nonlinear} {Integral} {Equations}},
	volume = {12},
	issn = {0004-5411, 1557-735X},
	url = {https://dl.acm.org/doi/10.1145/321296.321305},
	doi = {10.1145/321296.321305},
	language = {en},
	number = {4},
	urldate = {2025-11-02},
	journal = {Journal of the ACM},
	author = {Anderson, Donald G.},
	month = oct,
	year = {1965},
	pages = {547--560},
}

@book{evensen2009data,
  title={Data assimilation: the ensemble Kalman filter},
  author={Evensen, Geir},
  year={2009},
  publisher={Springer}
}

@article{walker2011anderson,
  title={Anderson acceleration for fixed-point iterations},
  author={Walker, Homer F and Ni, Peng},
  journal={SIAM Journal on Numerical Analysis},
  volume={49},
  number={4},
  pages={1715--1735},
  year={2011},
  publisher={SIAM}
}

@article{kingma2014adam,
  title={Adam: A Method for Stochastic Optimization},
  author={Kingma, Diederik P and Ba, Jimmy},
  journal={CoRR},
  year={2014},
  volume={abs/1412.6980},
  url={https://api.semanticscholar.org/CorpusID:6628106}
}

@article{pulay1982,
	abstract = {Abstract An improved version of the direct inversion in the iterative subspace algorithm is developed. The method is significantly more efficient than the previous version, and is applicable to intrinsically divergent or slowly convergent cases. Comparisons indicate that the method is superior to the recently proposed quadratically convergent (QC--SCF) method of Bacskay.},
	author = {Pulay, P.},
	doi = {https://doi.org/10.1002/jcc.540030413},
	eprint = {https://onlinelibrary.wiley.com/doi/pdf/10.1002/jcc.540030413},
	journal = {Journal of Computational Chemistry},
	number = {4},
	pages = {556-560},
	title = {Improved SCF convergence acceleration},
	url = {https://onlinelibrary.wiley.com/doi/abs/10.1002/jcc.540030413},
	volume = {3},
	year = {1982},
	bdsk-url-1 = {https://onlinelibrary.wiley.com/doi/abs/10.1002/jcc.540030413},
	bdsk-url-2 = {https://doi.org/10.1002/jcc.540030413}}

@article{iglesias2013ensemble,
  title={Ensemble Kalman methods for inverse problems},
  author={Iglesias, Marco A and Law, Kody JH and Stuart, Andrew M},
  journal={Inverse Problems},
  volume={29},
  number={4},
  pages={045001},
  year={2013},
  publisher={IOP Publishing}
}

@incollection{powell2006newuoa,
  title={The NEWUOA software for unconstrained optimization without derivatives},
  author={Powell, Michael JD},
  booktitle={Large-scale nonlinear optimization},
  pages={255--297},
  year={2006},
  publisher={Springer}
}

@article{wierstra2014natural,
  title={Natural evolution strategies},
  author={Wierstra, Daan and Schaul, Tom and Glasmachers, Tobias and Sun, Yi and Peters, Jan and Schmidhuber, J{\"u}rgen},
  journal={The Journal of Machine Learning Research},
  volume={15},
  number={1},
  pages={949--980},
  year={2014},
  publisher={JMLR. org}
}

@inproceedings{glorot2010understanding,
  title={Understanding the difficulty of training deep feedforward neural networks},
  author={Glorot, Xavier and Bengio, Yoshua},
  booktitle={Proceedings of the thirteenth international conference on artificial intelligence and statistics},
  pages={249--256},
  year={2010},
  organization={JMLR Workshop and Conference Proceedings}
}

\appendices

\section{Idealised descent on the probe span}
\label{sec:descent_lemma}

\begin{lemma}[Subspace descent]
Let the residual energy be
\begin{equation}
  \label{eq:phi_def_appendix}
  \phi(\bx) = \tfrac{1}{2}\lVert F(\bx)\rVert_2^2.
\end{equation}
Fix an iterate $\bx_t$ and form probes
\begin{equation}
  \label{eq:probe_matrix_appendix}
  \bP = [\bp_1,\ldots,\bp_k], \qquad \bp_i = \sigma \mathbf{g}_i,
\end{equation}
and residual differences
\begin{equation}
  \label{eq:B_differences_appendix}
  \bB = [F(\bx_t+\bp_1)-F(\bx_t),\ldots,F(\bx_t+\bp_k)-F(\bx_t)].
\end{equation}
Assume:
\begin{enumerate}
  \item $F$ is differentiable at $\bx_t$ with Jacobian $\bJ_t$.
  \item The linear relation
  \begin{equation}
    \label{eq:span_linear}
    F(\bx_t+z) = F(\bx_t) + \bJ_t z
  \end{equation}
  holds for every $z\in\spn(\bP)$.
\end{enumerate}
Let $\bw_\star$ minimise
\begin{equation}
  \label{eq:psi_definition}
  \Psi(\bw) = \tfrac{1}{2}\lVert F(\bx_t) + \bB \bw\rVert_2^2 + \tfrac{\lambda}{2}\lVert \bw\rVert_2^2,
\end{equation}
with $\lambda\ge 0$, and set the updates
\begin{equation}
  \label{eq:appendix_update}
  \Delta \bx_\star = \bP \bw_\star, \qquad \bx_{t+1} = \bx_t + \Delta \bx_\star.
\end{equation}
Then $\phi(\bx_{t+1}) \le \phi(\bx_t)$, with strict descent when the projected gradient $G \coloneqq \bP^\top \bJ_t^\top F(\bx_t)$ is nonzero.
\label{lem:subspace_descent}
\end{lemma}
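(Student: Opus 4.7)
The core observation to exploit is that assumption (2) turns the ridge surrogate $\Psi$ into an exact reformulation of $\phi$ restricted to the affine subspace $\bx_t + \spn(\bP)$, not just a first-order approximation. The plan is to (i) rewrite $\phi(\bx_{t+1})$ exactly as $\Psi(\bw_\star)$ minus a nonnegative ridge correction, (ii) invoke optimality of $\bw_\star$ against the trivial baseline $\bw = 0$, and (iii) use $\nabla\Psi(0)$ to upgrade the weak inequality to a strict one when $G \neq 0$.

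First I would apply \eqref{eq:span_linear} twice. Once to each probe $\bp_i\in\spn(\bP)$, which gives $\bB = \bJ_t\bP$ as an exact identity rather than the approximation used in \eqref{eq:B_linearised}. Once to the composite step $\bP\bw_\star\in\spn(\bP)$, which yields $F(\bx_{t+1}) = F(\bx_t) + \bJ_t\bP\bw_\star = F(\bx_t) + \bB\bw_\star$. Squaring and subtracting the ridge contribution then produces the bookkeeping identity $\phi(\bx_{t+1}) = \Psi(\bw_\star) - \tfrac{\lambda}{2}\lVert\bw_\star\rVert_2^2$, while $\phi(\bx_t) = \Psi(0)$ follows by evaluating $\Psi$ at the origin.

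The descent half is then a two-step chain. By the definition of $\bw_\star$, $\Psi(\bw_\star) \le \Psi(0)$, and since $\lambda \ge 0$ the ridge correction is nonnegative, so $\phi(\bx_{t+1}) \le \Psi(\bw_\star) \le \Psi(0) = \phi(\bx_t)$. For the strict case I would compute $\nabla\Psi(0) = \bB^\top F(\bx_t) = \bP^\top\bJ_t^\top F(\bx_t) = G$ using $\bB = \bJ_t\bP$. When $G \neq 0$ the origin fails first-order optimality for the convex quadratic $\Psi$, so $\Psi(\bw_\star) < \Psi(0)$ and the chain above becomes strict.

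I do not anticipate a substantive obstacle. The one point requiring care is the sign of the ridge correction: the $-\tfrac{\lambda}{2}\lVert\bw_\star\rVert_2^2$ term sits on the descent side of the inequality, so it helps rather than hinders, and no separate case split on $\lambda > 0$ versus $\lambda = 0$ is needed for the descent claim. For the strict case a minor nuance is that at $\lambda = 0$ the minimiser $\bw_\star$ need not be unique when $\bB$ is rank-deficient, but the argument only needs $\Psi(\bw_\star) < \Psi(0)$ for some minimiser, which $\nabla\Psi(0) = G \neq 0$ already guarantees on any convex quadratic.
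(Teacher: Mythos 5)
Your proposal is correct and follows essentially the same route as the paper's proof: establish $\bB=\bJ_t\bP$ from the exact linearity on $\spn(\bP)$, identify $\Psi(\bw)=\phi(\bx_t+\bP\bw)+\tfrac{\lambda}{2}\lVert\bw\rVert_2^2$ with $\Psi(0)=\phi(\bx_t)$, compare $\bw_\star$ against the origin, and use $\nabla\Psi(0)=G$ for the strict case. Your added remarks on the sign of the ridge correction and on non-uniqueness of the minimiser at $\lambda=0$ are sound but do not change the argument.
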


\begin{proof}
Assumption \eqref{eq:span_linear} implies
\begin{equation}
  \label{eq:B_equals_JP}
  \bB = \bJ_t \bP.
\end{equation}
For any $\bw$,
\begin{equation}
  \label{eq:psi_surrogate}
  \Psi(\bw) = \phi(\bx_t + \bP \bw) + \tfrac{\lambda}{2}\lVert \bw\rVert_2^2.
\end{equation}
Evaluating at the origin gives
\begin{equation}
  \label{eq:psi_origin}
  \Psi(0) = \phi(\bx_t).
\end{equation}
Optimality of $\bw_\star$ therefore yields
\begin{equation}
  \label{eq:phi_descent_appendix}
  \phi(\bx_{t+1}) = \Psi(\bw_\star) - \tfrac{\lambda}{2}\lVert \bw_\star\rVert_2^2 \le \Psi(0) = \phi(\bx_t).
\end{equation}
When $G \neq 0$, the gradient of $\Psi$ at the origin satisfies
\begin{equation}
  \label{eq:gradient_identity}
  \nabla \Psi(0) = \bB^\top F(\bx_t) = G \neq 0,
\end{equation}
so the origin cannot minimise $\Psi$. Any minimiser achieves $\Psi(\bw_\star) < \Psi(0)$, which forces $\phi(\bx_{t+1}) < \phi(\bx_t)$.
\end{proof}

\section{Ridge parameter selection}
\label{sec:ridge_justification}

This appendix derives the residual-dependent ridge schedule $\lambda_t = \max(\beta\lVert\br_t\rVert_2^2,\,\epsilon)$ used in the main algorithm.

\paragraph{Uniform bound on the ridge solution.}

The ridge problem~\eqref{eq:ridge_problem} has the closed-form minimiser
\begin{equation}
  \label{eq:w_star_closed}
  \bw^* = -(\bB^\top \bB + \lambda I)^{-1}\bB^\top \br_t.
\end{equation}
Let $\bB = \bU\boldsymbol{\Sigma}\bV^\top$ be a singular value decomposition. Then
\begin{equation}
  (\bB^\top \bB + \lambda I)^{-1}\bB^\top = \bV(\boldsymbol{\Sigma}^2 + \lambda I)^{-1}\boldsymbol{\Sigma}\bU^\top,
\end{equation}
and the operator norm satisfies
\begin{equation}
  \label{eq:operator_norm}
  \lVert(\bB^\top \bB + \lambda I)^{-1}\bB^\top\rVert_2 = \max_{\sigma \ge 0} \frac{\sigma}{\sigma^2 + \lambda}.
\end{equation}
The function $f(\sigma) = \sigma/(\sigma^2 + \lambda)$ attains its maximum at $\sigma = \sqrt{\lambda}$, where $f_{\max} = 1/(2\sqrt{\lambda})$. Hence
\begin{equation}
  \label{eq:w_bound}
  \lVert\bw^*\rVert_2 \le \frac{1}{2\sqrt{\lambda}}\lVert\br_t\rVert_2
\end{equation}
for any matrix $\bB$, and the bound is tight.

\paragraph{Residual-dependent schedule.}

Choosing $\lambda \ge \beta\lVert\br_t\rVert_2^2$ with $\beta > 0$ yields
\begin{equation}
  \lVert\bw^*\rVert_2 \le \frac{1}{2\sqrt{\lambda}}\lVert\br_t\rVert_2 \le \frac{1}{2\sqrt{\beta}},
\end{equation}
so the coefficient norm remains uniformly bounded regardless of the residual magnitude or the structure of $\bB$. To prevent vanishing regularisation when the residual becomes small, the algorithm sets
\begin{equation}
  \label{eq:lambda_final}
  \lambda = \max\{\beta\lVert\br_t\rVert_2^2,\,\epsilon\},
\end{equation}
which ensures
\begin{equation}
  \lVert\bw^*\rVert_2 \le \min\!\left\{\frac{1}{2\sqrt{\beta}},\,\frac{\lVert\br_t\rVert_2}{2\sqrt{\epsilon}}\right\}.
\end{equation}
The first term caps the coefficient norm when the residual is large; the second keeps the step size proportional to the residual once it falls below $\sqrt{\epsilon/\beta}$. This adaptive schedule tightens the ridge as the iterate improves while maintaining numerical stability near convergence.
\end{document}